\newcommand{\E}{\mathbb{E}}
\newcommand{\R}{\mathbb{R}}
\newcommand{\N}{\mathbb{N}}
\newtheorem{theo}{Theorem}[section]
\newtheorem{cor}[theo]{Corollary}
\newtheorem{rem}[theo]{Remark}
\newtheorem{lemma}[theo]{Lemma}
\newtheorem{ass}{Assumption}
\begin{document}

\title{Approximation of the invariant distribution for a class of ergodic SDEs with one-sided Lipschitz continuous drift coefficient using an explicit tamed  Euler scheme}
\author{Charles-Edouard Br\'ehier}
\address{Univ Lyon, Université Claude Bernard Lyon 1, CNRS UMR 5208, Institut Camille Jordan, 43 blvd. du 11 novembre 1918, F-69622 Villeurbanne cedex, France}
\email{brehier@math.univ-lyon1.fr}

\date{}

\keywords{Stochastic differential equations,tamed scheme,invariant distribution}
\subjclass{60H35;65C30;}

\begin{abstract}
We consider the long-time behavior of an explicit tamed Euler scheme applied to a class of stochastic differential equations driven by additive noise, under a one-sided Lipschitz continuity condition. The setting encompasses drift nonlinearities with polynomial growth. First, we prove that moment bounds for the numerical scheme hold, with at most polynomial dependence with respect to the time horizon. Second, we apply this result to obtain error estimates, in the weak sense, in terms of the time-step size and of the time horizon, to quantify the error to approximate averages with respect to the invariant distribution of the continuous-time process. We justify the efficiency of using the explicit tamed Euler scheme to approximate the invariant distribution, since the computational cost does not suffer from the at most polynomial growth of the moment bounds. To the best of our knowledge, this is the first result in the literature concerning the approximation of the invariant distribution for SDEs with non-globally Lipschitz coefficients using an explicit tamed scheme.
\end{abstract}

\maketitle

\section{Introduction}

The long-time behavior of deterministic and stochastic processes and of their discrete-time approximations has been an active research area, with applications in all field of science. In this manuscript, we consider Stochastic Differential Equations (SDEs) of the type
\[
dX(t)=f(X(t))dt+\sum_{k=1}^{K}\sigma_k d\beta^k(t),
\]
where $X(t)\in\R^d$, $\sigma_k\in\R^d$, $\beta^k$ are independent standard real-valued Wiener processes. The drift $f:\R^d\to\R$ is assumed to be non-globally Lipschitz continuous, instead it has polynomial growth and satisfies a one-sided Lipschitz continuous condition, see Assumptions~\ref{ass:poly} and~\ref{ass:condition}. Under these conditions, the SDE is globally well-posed and admits a unique invariant probability $\mu_\star$, such that
\[
\E[\varphi(X(T))]\underset{T\to\infty}\to \int\varphi d\mu_\star,
\]
exponentially fast, for any initial condition $X(0)$ and any real-valued Lipschitz continuous function $\varphi$. In general, no explicit expression for $f$ is known. We study the question of approximating the invariant distribution $\mu_\star$ using a numerical scheme. The main novelty of this article is to show that an explicit scheme can be used, without loss of computational efficiency, even if the nonlinearity $f$ is not globally Lipschitz continuous.

Applying an explicit scheme for SDEs with non-globally Lipschitz continuous coefficients may lead to important issues due to the lack of moment bounds, see for instance~\cite{HutzenthalerJentzenKloeden:11}. In the last two decades, many strategies have been explored: see for instance the monograph~\cite{HutzenthalerJentzen} and references therein, and the articles~\cite{HutzenthalerJentzenKloeden:12},~\cite{Sabanis},~\cite{KellyLord},~\cite{MilsteinTretyakov}, among many other contributions.

In this article, we apply the tamed Euler-Maruyama method (see Equation~\ref{eq:scheme})
\[
X_{n+1}=X_n+\frac{\Delta tf(X_n)}{1+\Delta t\|f(X_n)\|}+\sum_{k=1}^{K}\sigma_k\Delta \beta_n^k,
\]
for which for every $T$, one has moment bounds of the type
\[
\underset{0\le n\Delta t\le T}\sup~\bigl(\E[\|X_n\|^m]\bigr)^{\frac1m}<\infty.
\]
To the best of our knowledge, the question of the dependence with respect to $T$, and the possibility to have uniform in time moment bounds, has not been studied in the literature yet. Our first contribution (see Theorem~\ref{theo:moment}) is to prove that, under appropriate assumptions, one has
\[
\underset{0\le n\Delta t\le T}\sup~\bigl(\E[\|X_n\|^m]\bigr)^{\frac1m}\le C(\|x_0\|)(1+T^M),
\]
where $X_0=x_0$, for some $M\ge 1$. As a consequence, the growth with respect to $T$ is at most polynomial. We only proved upper bounds, and the question whether uniform moment bounds (with $M=0$) remains open.

Our second contribution is to apply these upper bounds for the analysis of the error between $\E[\varphi(X_N)]$ and $\int\varphi d\mu_\star$, see Theorem~\ref{theo:error}. Under appropriate assumptions, one has
\[
\big|\E[\varphi(X_N)]-\int\varphi d\mu_\star\big|\le C(\|x_0\|,\varphi)\Bigl(\exp(-\gamma N\Delta t)+\bigl(1+(N\Delta t)^M\bigr)\Delta t\Bigr),
\]
where $\gamma>0$. Contrary to existing works in the literature concerning the numerical approximation of the invariant distribution for SDEs (this is a classical problem, see for instance~\cite{MattinglyStuartHigham},~\cite{MattinglyStuartTretyakov},~\cite{Talay}), the weak error $|\E[\varphi(X_N)]-\E[\varphi(X(N\Delta t))]$ is not of size $\Delta t$ uniformly in time, and one cannot take the limit $T=N\Delta t\to \infty$ in the weak error estimate.

Nevertheless, the fact that the dependence with respect to $T=N\Delta t$ is at most polynomial is striking, and analyzing the computational cost (see Corollary~\ref{cor:cost}) shows that, to have an error less than $\varepsilon$, the cost is of size
\[
\mathcal{C}(\varepsilon)\le C\varepsilon^{-1}|\log(\varepsilon)|^{1+M}.
\]
Passing from the case with uniform in time moment bounds($M=0$) to the case with polynomial dependence ($M\ge 1$) does not substantially increase the computational cost and the method is effective. In the numerical experiments that have been performed (and are not reported here), it has not been possible to exhibit the polynomial growth with respect to time in the moment bounds or in the weak error estimate.

One of the objectives of this article is to present in a simplified framework the strategy and the arguments used in a companion paper devoted to the case of semilinear parabolic Stochastic Partial Differential Equations. The presentation is intended to be pedagogical.

This manuscript is organized as follows. The setting is presented in Section~\ref{sec:setting}, including the statement of precise assumptions (Assumptions~\ref{ass:poly} and~\ref{ass:condition}), properties of the solutions of the SDE~\eqref{eq:SDE}, and the definition of the numerical scheme (Equation~\eqref{eq:scheme}). The main results are stated and discussed in Section~\ref{sec:main}. The proof of Theorem~\ref{theo:moment} is provided in Section~\ref{sec:proof1}, whereas Section~\ref{sec:proof2} presents the proof of Theorem~\ref{theo:error}.

\section{Setting}\label{sec:setting}

Let $d\in\N$. The standard inner product and norm in the space $\R^d$ are denoted by $\langle \cdot,\cdot\rangle$ and $\|\cdot\|$ respectively.

Let us first state assumptions concerning the nonlinear drift coefficient $f$: it is assumed to be of class $\mathcal{C}^2$, with at most polynomial growth (Assumption~\ref{ass:poly}, and to satisfy a one-sided Lipschitz continuity condition (Assumption~\ref{ass:condition}).
\begin{ass}\label{ass:poly}
Let $f:\R^d\to \R^d$ be of class $\mathcal{C}^2$, with at most polynomial growth in the following sense: there exists $q\in\N$ such that
\begin{equation}
\mathcal{N}_q(f):=\underset{x\in\R^d}\sup~\frac{\|f(x)\|+\sum_{j=1}^{2}\underset{\|h_1\|,\ldots,\|h_j\|\le 1}\sup~\|D^jf(x).(h_1,\ldots,h_j)\|}{1+|x|^q}<\infty.
\end{equation}
\end{ass}
The notation $Df(x).h$ and $D^2f(x).(h_1,h_2)$ is used to denote first and second order derivatives of $f$ at point $x$, in directions $h,h_1,h_2\in\R^d$.

\begin{ass}\label{ass:condition}
The following condition is satisfied: there exists $\gamma>0$ such that for all $x_1,x_2\in\R^d$ one has
\begin{equation}\label{eq:condition}
\langle f(x_2)-f(x_1),x_2-x_1\rangle\le -\gamma|x_2-x_1|^2.
\end{equation}
\end{ass}

Let us now describe the noise. Let $\sigma_1,\ldots,\sigma_K\in \R^d$ and let $\bigl(\beta^1(t)\bigr)_{t\ge 0},\ldots\bigl(\beta^K(t)\bigr)_{t\ge 0}$ be independent standard real-valued Wiener processes, defined on a probability space $(\Omega,\mathcal{F},\mathbb{P})$ satisfying the usual conditions. In the sequel, the notation
\[
\sigma dB(t)=\sum_{k=1}^{K}\sigma_k d\beta^k(t)
\]
is used.

In this work, we consider the following SDE with values in $\R^d$:
\begin{equation}\label{eq:SDE}
dX(t)=f(X(t))dt+\sigma dB(t).
\end{equation}
Owing to the locally Lipschitz and the one-sided Lipschitz properties of the function $f$, it is straigthforward to check that for any $x_0\in \R^d$, there exists a unique global solution $\bigl(X_{x_0}(t)\bigr)_{t\ge 0}$ to~\eqref{eq:SDE} with $X_{x_0}(0)=x_0$. Moreover, for every $m\in[1,\infty)$, there exists a polynomial function $\mathcal{P}_m:\R\to\R$ such that
\begin{equation}\label{eq:momentX}
\underset{t\ge 0}\sup~\E[\|X_{x_0}(t)\|^m]\le \mathcal{P}_m(\|x_0\|).
\end{equation}
Finally, there exists a unique invariant distribution $\mu_\star$, such that any Lipschitz continuous function $\varphi:\R^d\to\R$, one has for all $T\in(0,\infty)$ and $x_0\in\R^d$
\begin{equation}\label{eq:ergoX}
\big|\E[\varphi(X_{x_0}(T))]-\int\varphi d\mu_\star\big|\le e^{-\gamma T}{\rm Lip}(\varphi)(1+\|x_0\|).
\end{equation}
Moment bounds~\eqref{eq:momentX} are obtained by an application of the It\^o formula, and the use of the one-sided Lipschitz continuity condition~\eqref{eq:condition}. Applying the Krylov-Bogoliubov criterion, the moment bounds~\eqref{eq:momentX} yield existence of invariant distributions, and uniqueness follows from a straightforward coupling argument: if $x_0^1,x_0^2\in\R^d$ are two initial conditions, then almost surely for all $t\ge 0$ one has
\[
\|X_{x_0^2}(t)-X_{x_0^1}(t)\|\le e^{-\gamma t}\|x_0^2-x_0^1\|,
\]
using the one-sided Lipschitz condition~\eqref{eq:condition}, where the processes $X_{x_0^1}$ and $X_{x_0^2}$ are driven by the same noise process.

In the sequel, to simplify notation the dependence with respect to $x_0$ is omitted: $X(t)=X_{x_0}(t)$ for all $t\ge 0$.

The objective is to define estimators of $\int\varphi d\mu_\star$, using an explicit integrator for the discretization of the SDE~\eqref{eq:SDE}. Note that in general, no explicit expression of the invariant distribution $\mu_\star$ is known, and if dimension $d$ is large quadrature rules are not efficient and Monte-Carlo methods are usually employed.

Let $\Delta t$ denote the time-step size, and without loss of generality assume that $\Delta t\in(0,\Delta t_0]$ for some arbitrarily fixed $\Delta t_0>0$. Let $t_n=n\Delta t$, and define the Wiener increments as $\Delta \beta_n^k=\beta^k(t_{n+1})-\beta^k(t_n)$, and $\sigma\Delta B_n=\sigma_k\Delta t\beta_n^k$, for all $n\ge 0$.
%

In this work, we consider the explicit tamed Euler-Maruyama method
\begin{equation}\label{eq:scheme}
X_{n+1}=X_n+\frac{\Delta t}{1+\alpha\Delta t\|f(X_n)\|}f(X_n)+\sigma\Delta B_n,
\end{equation}
where $\alpha\in(0,\infty)$ is a given parameter and $X_0=x_0$. The parameter $\alpha$ does not play an important role in the sequel, and the notation $M_n=\alpha\|f(X_n)\|$ is used. Introduce a continuous-time auxiliary process $\bigl(\tilde{X}(t)\bigr)_{t\ge 0}$ as follows: for all $t\ge 0$, set
\begin{equation}\label{eq:tildeX}
\tilde{X}(t)=x_0+\int_{0}^{t}\frac{1}{1+\Delta tM_{\ell(s)}}f(X_{\ell(s)})ds+\int_{0}^{t}\sigma dB(s)
\end{equation}
where $\ell(t)=n$ if and only if $t_n\le t<t_{n+1}$, and $\bigl(X_n\bigr)_{n\ge 0}$ is defined by the tamed explicit Euler-Maruyama scheme~\eqref{eq:scheme}. Observe that one has $\tilde{X}(t_n)=X_n$ for all $n\ge 0$.

The process $\bigl(X_n\bigr)_{n\ge 0}$ and $\bigl(\tilde{X}(t)\bigr)_{t\ge 0}$ depend on the time-step size $\Delta t$ and on the initial condition $x_0$, however the dependence is omitted to simplify notation.

\begin{rem}
The explicit tamed Euler-Maruyama scheme~\eqref{eq:scheme} can be interpreted as coming from the application of the standard explicit Euler-Maruyama scheme for the SDE
\[
dX^{\Delta t}(t)=f_{\Delta t}(X^{\Delta t}(t))dt+\sigma dB(t)
\]
with modified drift coefficient
\[
f_{\Delta t}(x)=\frac{f(x)}{1+\alpha\|f(x)\|}.
\]
However, the function $f^{\Delta t}$ does not satisfy a one-sided Lipschitz continuity condition as in Assumption~\ref{ass:condition}, with $\gamma>0$. Indeed, let $d=1$ and assume that $f$ is a polynomial function, for instance $f(x)=-x-x^3$, then one checks that
\[
f_{\Delta t}'(x)\underset{x\to\pm\infty}\to 0.
\]
As a consequence, the long-time behavior (uniform in time moment bounds, existence and uniqueness of invariant distriutions) of the process $X^{\Delta t}$ may be non trivial. It may even be the case that $\underset{x\in\R}\sup~f_{\Delta t}'(x)>0$, in which case uniform in time moment bounds are not expected to hold. In turn no information on the behavior of the scheme~\eqref{eq:scheme} is obtained by using the interpretation of the tamed scheme.
\end{rem}

\section{Main results}\label{sec:main}

The objective of this section is to state the two main results of this article: Theorem~\ref{theo:moment} concerning moment bounds and Theorem~\ref{theo:error} concering weak error estimates. The most striking feature is the polynomial dependence with respect to the time $T$ (and with respect to the norm of the initial condition). The consequences in terms of computational cost for the approximation of the invariant distributions are also discussed, see Corollary~\ref{cor:cost}.

\begin{theo}\label{theo:moment}
Let Assumptions~\ref{ass:poly} and~\ref{ass:condition} be satisfied.

For every $m\in[1,\infty)$, there exists a polynomial function $\mathcal{P}_m:\R^2\to \R$, such that for all $T\in(0,\infty)$ and $x_0\in\R^d$, one has
\begin{equation}
\underset{\Delta t\in(0,\Delta t_0]}\sup~\underset{0\le n\Delta t\le T}\sup~\E[\|X_m\|^p]\le \mathcal{P}_m(T,\|x_0\|).
\end{equation}
\end{theo}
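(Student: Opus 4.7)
The plan is to prove the moment bound by induction on $m$. For each $m$ I derive a one-step recursion for $\E[\|X_{n+1}\|^{2m}\mid\F_n]$ from~\eqref{eq:scheme}, take full expectation, and iterate. The two workhorse estimates are the pointwise taming bound
\[
\Bigl\|\tfrac{\Delta t\,f(X_n)}{1+\alpha\Delta t\|f(X_n)\|}\Bigr\|\le\min\bigl(\Delta t\|f(X_n)\|,\tfrac{1}{\alpha}\bigr)\le C\,\Delta t^{\theta}(1+\|X_n\|^{q})^{\theta}\alpha^{\theta-1},\qquad\theta\in[0,1],
\]
and the one-sided Lipschitz bound from Assumption~\ref{ass:condition} applied at $x_1=0$, which, combined with Young's inequality, yields $\langle X_n,f(X_n)\rangle\le-\tfrac{\gamma}{2}\|X_n\|^{2}+C_0$.

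For the base case $m=1$, taking conditional expectation in the scheme gives
\[
\E[\|X_{n+1}\|^{2}\mid\F_n]=\|X_n\|^{2}+\frac{2\Delta t\,\langle X_n,f(X_n)\rangle}{1+\alpha\Delta t\|f(X_n)\|}+\frac{\Delta t^{2}\|f(X_n)\|^{2}}{(1+\alpha\Delta t\|f(X_n)\|)^{2}}+\Sigma^{2}\Delta t,
\]
with $\Sigma^{2}:=\sum_k\|\sigma_k\|^{2}$. The second drift term is $\le\Delta t/\alpha^{2}$ by taming. In the first drift term the numerator is non-positive outside a bounded ball of radius $O(1/\gamma)$, so dividing by the positive tamed denominator caps its positive part by the constant contribution from that ball; altogether $\E[\|X_{n+1}\|^{2}\mid\F_n]\le\|X_n\|^{2}+C\Delta t$, and iterating yields $\E[\|X_n\|^{2}]\le\|x_0\|^{2}+CT$.

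For the inductive step, I assume polynomial-in-$T$ bounds for every moment of order $<2m$ and expand $\|X_{n+1}\|^{2m}=(\|X_n\|^{2}+\Delta_n)^{m}$ binomially, with $\Delta_n=2\langle X_n,X_{n+1}-X_n\rangle+\|X_{n+1}-X_n\|^{2}$. After taking conditional expectation, the $k=1$ term is handled as in the base case modulo the weight $m\|X_n\|^{2(m-1)}$, contributing at most $Cm\Delta t\|X_n\|^{2(m-1)}$; each $k\ge 2$ term is estimated by a tailored combination of the interpolated taming bound, with $\theta$ tuned depending on $k$ and $q$, and Gaussian moments of $\sigma\Delta B_n$, yielding an estimate of the form $C\Delta t\,\mathcal{Q}_{m-1}(\|X_n\|)$ with $\mathcal{Q}_{m-1}$ polynomial. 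Taking expectations, the inductive hypothesis controls $\E[\mathcal{Q}_{m-1}(\|X_n\|)]$ as a polynomial in $(n\Delta t,\|x_0\|)$, and a Riemann-style summation over $n$ produces the desired polynomial-in-$T$ bound on $\E[\|X_n\|^{2m}]$.

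The main obstacle is the loss of dissipation caused by the taming denominator $1+\alpha\Delta t\|f(X_n)\|$: whenever $\|X_n\|\gtrsim\Delta t^{-1/q}$ the effective contraction rate of the scheme degrades from $\gamma$ to $O(\|X_n\|^{-q})$, precluding uniform-in-$T$ bounds of the kind available for implicit or semi-implicit methods and forcing the polynomial exponent $M\ge 1$ in the statement. The delicate step in the induction is to verify that, after choosing the interpolation exponent $\theta$, the residual polynomial $\mathcal{Q}_{m-1}$ in the $k\ge 2$ terms is simultaneously $\Delta t$-small and of a degree already controlled at the previous induction stage; this balancing, sensitive to the interplay between $m$, $q$, and $k$, is where the main technical effort will go.
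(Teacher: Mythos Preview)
Your induction scheme has a real gap already at the base case. The claim that the squared tamed drift satisfies
\[
\frac{\Delta t^{2}\|f(X_n)\|^{2}}{(1+\alpha\Delta t\|f(X_n)\|)^{2}}\le\frac{\Delta t}{\alpha^{2}}
\]
is false: writing $u=\alpha\Delta t\|f(X_n)\|$, the left-hand side equals $\alpha^{-2}u^{2}/(1+u)^{2}$, and $u^{2}/(1+u)^{2}$ can be arbitrarily close to $1$, not to $\Delta t$. The taming only gives the $O(1)$ bound $\alpha^{-2}$. Your one-step recursion therefore reads $\E[\|X_{n+1}\|^{2}\mid\F_n]\le\|X_n\|^{2}+C$, and iterating yields $\E[\|X_n\|^{2}]\le\|x_0\|^{2}+Cn=\|x_0\|^{2}+CT/\Delta t$, which blows up as $\Delta t\to 0$ and does not give a $\Delta t$-uniform polynomial bound in $T$. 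The dissipative cross term cannot rescue this: for large $\|X_n\|$ the tamed contraction $\frac{2\Delta t\langle X_n,f(X_n)\rangle}{1+\alpha\Delta t\|f(X_n)\|}$ scales like $-\|X_n\|^{2-q}$, which is negligible when $q>2$.

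The same obstruction breaks the inductive step. To extract a factor $\Delta t$ from a power $\|g_n\|^{k}$ of the tamed drift via your interpolated bound $\|g_n\|\le C\Delta t^{\theta}(1+\|X_n\|^{q})^{\theta}$, you must take $\theta\ge 1/k$, which injects an extra $\|X_n\|^{q\theta k}\ge\|X_n\|^{q}$ into the residual polynomial. For $q\ge 2$ the degree of $\mathcal{Q}_{m-1}$ then exceeds $2m-1$ (already the term $\|X_n\|^{2(m-1)}\|g_n\|^{2}$ produces degree $2(m-1)+q$), so the inductive hypothesis does not cover it and the induction does not close. The ``balancing'' you flag as delicate is in fact impossible in general.

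The paper avoids this by a localization argument rather than induction on $m$. It fixes a threshold $R=\Delta t^{-\kappa}$ with $\kappa<\tfrac{1}{2q}$ and splits $\E[\|X_n\|^{m}]$ according to whether $\sup_{\ell\le n}\|X_\ell\|\le R$. On the good event one passes to the continuous interpolation $\tilde X$, writes $\tilde X=Y+Z$ where $Y$ solves $\dot Y=f(Y+Z)$ and is controlled via the one-sided Lipschitz condition, and shows that $Z$ (which collects the taming error and the $f(X_{\ell(s)})-f(\tilde X(s))$ mismatch) has moments polynomial in $(T,\|x_0\|)$ precisely because $\Delta t\,R^{2q}\le C$. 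On the bad event one uses the crude deterministic bound $\|X_n\|\le\|x_0\|+T/\Delta t+\|\int\sigma\,dB\|$ together with Markov's inequality (fed by the good-event moment bounds) to make $\mathbb{P}(\text{bad event})$ small enough, of order $R^{-\theta}=\Delta t^{\kappa\theta}$ for arbitrarily large $\theta$, to absorb the $T/\Delta t$ factor. This localization is the missing idea in your proposal.
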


\begin{theo}\label{theo:error}
Let Assumptions~\ref{ass:poly} and~\ref{ass:condition} be satisfied.

For every $r\in\N$, there exists polynomial functions $\mathcal{P}_r:\R^2\to\R$ and $\mathcal{Q}_r:\R\to\R$, such that the following holds: if $\varphi:\R^d\to\R$ is a function of class $\mathcal{C}^2$ such that
\[
\mathcal{N}_r(\varphi)=\underset{x\in\R^d}\sup~\frac{|\varphi(x)|+\sum_{j=1}^{2}\underset{\|h_1\|,\ldots,\|h_j\|\le 1}\sup~|D^jf(x).(h_1,\ldots,h_j)|}{1+|x|^r}<\infty,
\]
then for all $N\in\N$, $x_0\in\R^d$ and $\Delta t\in(0,\Delta t_0]$, one has
\begin{equation}
\big|\E[\varphi(X_N)]-\int\varphi d\mu_\star\big|\le \mathcal{N}_r(\varphi)\Bigl(\exp(-\gamma N\Delta t)Q_r(\|x_0\|)+\Delta t\mathcal{P}_r(N\Delta t,\|x_0\|)\Bigr),
\end{equation}
where $\bigl(X_n\bigr)_{n\ge 0}$ is defined by the tamed Euler-Maruyama scheme~\eqref{eq:scheme}.
\end{theo}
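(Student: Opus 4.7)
The plan is to decompose the error via the triangle inequality
\[
\big|\E[\varphi(X_N)]-\int\varphi d\mu_\star\big|\le \big|\E[\varphi(X_N)]-\E[\varphi(X(N\Delta t))]\big|+\big|\E[\varphi(X(N\Delta t))]-\int\varphi d\mu_\star\big|,
\]
and treat the two terms by different techniques. The first contribution is a weak error between the tamed scheme and the exact SDE~\eqref{eq:SDE} on a finite horizon $T=N\Delta t$, while the second is the speed of convergence to equilibrium of the continuous-time process.

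For the ergodic term, the bound~\eqref{eq:ergoX} is not directly applicable since $\varphi$ has polynomial (not Lipschitz) growth. I would promote~\eqref{eq:ergoX} to test functions in $\mathcal{C}^2$ with polynomial growth by coupling: using the mean value theorem together with $\|D\varphi(z)\|\le \mathcal{N}_r(\varphi)(1+\|z\|^r)$, one obtains
\[
|\varphi(x_1)-\varphi(x_2)|\le C\mathcal{N}_r(\varphi)\bigl(1+\|x_1\|^r+\|x_2\|^r\bigr)\|x_1-x_2\|.
\]
Combined with the synchronous coupling estimate $\|X_{x_0^1}(t)-X_{x_0^2}(t)\|\le e^{-\gamma t}\|x_0^1-x_0^2\|$ and the moment bound~\eqref{eq:momentX} applied to both $x_0$ and to $x_0^2$ sampled from $\mu_\star$, Hölder's inequality produces the desired $\exp(-\gamma T)Q_r(\|x_0\|)$ factor.

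For the weak error between scheme and SDE, I would use the Talay--Tubaro strategy based on the backward Kolmogorov equation. Define $U(t,x)=\E[\varphi(X_x(t))]$ so that $\partial_t U=\mathcal{L}U$ with $\mathcal{L}\psi(x)=\langle f(x),\nabla\psi(x)\rangle+\tfrac12\sum_k \langle \sigma_k,D^2\psi(x)\sigma_k\rangle$. Introducing the auxiliary continuous-time process $\tilde X$ from~\eqref{eq:tildeX}, the telescoping identity
\[
\E[\varphi(X_N)]-\E[\varphi(X(T))]=\sum_{n=0}^{N-1}\int_{t_n}^{t_{n+1}}\E\Big[\big\langle \nabla U(T-s,\tilde X(s)),\,\tfrac{f(X_n)}{1+\Delta t M_n}-f(\tilde X(s))\big\rangle\Big]\,ds,
\]
follows from an application of Itô's formula to $U(T-s,\tilde X(s))$ combined with the backward Kolmogorov identity. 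The integrand is split into a \emph{taming error} $\tfrac{f(X_n)}{1+\Delta t M_n}-f(X_n)=-\tfrac{\Delta tM_n f(X_n)}{1+\Delta t M_n}$, of size $\Delta t(1+\|X_n\|^{2q})$ by Assumption~\ref{ass:poly}, and a \emph{discretization error} $f(X_n)-f(\tilde X(s))$, analyzed by Taylor expansion in $\tilde X(s)-X_n=-(s-t_n)\tfrac{f(X_n)}{1+\Delta t M_n}-\sigma(B(s)-B(t_n))$. The martingale part of the discretization error is handled by replacing $\nabla U(T-s,\tilde X(s))$ by $\nabla U(T-s,X_n)$ (which gives a vanishing expectation by independence) and controlling the remainder with $D^2 U$; this is the standard mechanism producing a size-$\Delta t$ contribution per time increment.

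The crucial ingredient, and the main obstacle, is to establish regularity estimates on $U$ of the form $\|\nabla U(t,x)\|+\|D^2U(t,x)\|\le C e^{-\gamma t}(1+\|x\|^{r'})$ with \emph{polynomial} growth in $x$ and \emph{exponential} decay in $t$. Writing $\nabla U(t,x)\cdot h=\E[D\varphi(X_x(t))\cdot\eta_x^h(t)]$ and $D^2U(t,x)\cdot(h_1,h_2)=\E[D^2\varphi(X_x(t))(\eta_x^{h_1}(t),\eta_x^{h_2}(t))]+\E[D\varphi(X_x(t))\cdot\zeta_x^{h_1,h_2}(t)]$ in terms of the first and second variation processes, the one-sided Lipschitz condition~\eqref{eq:condition} gives $\|\eta_x^h(t)\|\le e^{-\gamma t}\|h\|$ pointwise; the second variation is controlled by a Young/Gronwall argument exploiting the same dissipativity, yielding $\E[\|\zeta_x^{h_1,h_2}(t)\|^2]^{1/2}\le Ce^{-\gamma t}(1+\|x\|^{q})\|h_1\|\|h_2\|$. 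Plugging these estimates back into the telescoping sum above, together with the polynomial moment bounds from Theorem~\ref{theo:moment} applied to $X_n$ and to the auxiliary process $\tilde X(s)$, the summand at step $n$ is bounded by $\Delta t\cdot e^{-\gamma(T-t_n)}\cdot\mathcal{P}(t_n,\|x_0\|)$, and summing over $n$ produces only a polynomial factor in $T$ (since $\sum_n e^{-\gamma(T-t_n)}\Delta t\le \gamma^{-1}$), giving exactly the $\Delta t\,\mathcal{P}_r(N\Delta t,\|x_0\|)$ estimate claimed.
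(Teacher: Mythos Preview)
Your proposal is correct and follows essentially the same route as the paper: the same Kolmogorov-equation telescoping decomposition via $u(t,x)=\E_x[\varphi(X(t))]$, the same variation-process estimates for $Du$ and $D^2u$ under the one-sided Lipschitz condition, and the same three-way split of the integrand into a taming error, a frozen-point discretization term handled by a conditional-expectation (martingale) argument, and a remainder controlled by $D^2u$. The only cosmetic difference is that the paper deliberately uses the derivative bounds with $\gamma'=0$ in the weak-error sum (absorbing the resulting extra factor of $T$ into $\mathcal{P}_r$) rather than exploiting the decay $e^{-\gamma'(T-t_n)}$ as you do; note in passing that the second-variation estimate yields $e^{-\gamma' t}$ with $\gamma'<\gamma$ after the Young-inequality step, not $e^{-\gamma t}$, and that your expression for $\tilde X(s)-X_n$ has a sign slip.
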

In Theorem~\ref{theo:error}, the function $\varphi$ is allowed to have at most polynomial growth (as well as its derivatives).

To simplify the presentation, the degrees of the polynomial functions $\mathcal{P}_m$ in Theorem~\ref{theo:moment} and $\mathcal{P}_r$ and $\mathcal{Q}_r$ in Theorem~\ref{theo:error} are not indicated, however they could be identified by a close inspection of the proofs. This is left to the interested readers. Since we want to insist on the polynomial dependence with respect to time $T$ or $N\Delta t$, the exact value of the degrees does not matter.

Let us now draw consequences of Theorem~\ref{theo:error} in terms of computational cost. In practice, the expectation $\E[\varphi(X_n)]$ needs to be approximated using Monte-Carlo averages. The total computational cost may be reduced for instance using the Multilevel Monte-Carlo method. Since those aspects are not specific to the situation studied in this article, we only consider the cost per realization. The cost to sample a realization of $X_N$ is proportional to $N$. Then one has the following result.

\begin{cor}\label{cor:cost}
Let $x_0\in\R^d$ and $\varphi:\R^d\to\R$ of class $\mathcal{C}^2$ such that $\mathcal{N}_r(\varphi)<\infty$.

There exists a constant $C=C(x_0,\varphi)\in(0,\infty)$, such that for all $\varepsilon\in(0,1)$, the error satisfies
\[
\big|\E[\varphi(X_N)]-\int\varphi d\mu_\star\big|\le \varepsilon
\]
with a computational cost satisfying
\[
\mathcal{C}(\varepsilon)\le C\varepsilon^{-1}|\log(\varepsilon)|^{1+R}
\]
for some $R\in\N$.
\end{cor}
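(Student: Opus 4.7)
The plan is to use Theorem~\ref{theo:error} and balance the two sources of error by choosing the time horizon $T=N\Delta t$ and the time-step $\Delta t$ as functions of $\varepsilon$. Write
\[
\big|\E[\varphi(X_N)]-\int\varphi d\mu_\star\big|\le \mathcal{N}_r(\varphi)\Bigl(e^{-\gamma T}\mathcal{Q}_r(\|x_0\|)+\Delta t\, \mathcal{P}_r(T,\|x_0\|)\Bigr),
\]
and denote by $M\in\N$ the degree of $\mathcal{P}_r(\cdot,\|x_0\|)$ in its first variable (this is finite since $\mathcal{P}_r$ is polynomial; $M$ depends only on $r$). I will ensure that each of the two terms is at most $\varepsilon/2$.

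First, I would choose $T=N\Delta t$ so that the exponential term is controlled: taking
\[
T=\gamma^{-1}\log\!\Bigl(\tfrac{2\mathcal{N}_r(\varphi)\mathcal{Q}_r(\|x_0\|)}{\varepsilon}\Bigr)
\]
yields $\mathcal{N}_r(\varphi)e^{-\gamma T}\mathcal{Q}_r(\|x_0\|)\le \varepsilon/2$, and for $\varepsilon\in(0,1)$ we have $T\le C(x_0,\varphi)|\log(\varepsilon)|$. Second, since $\mathcal{P}_r(T,\|x_0\|)\le C(x_0)(1+T^M)$, I would then choose $\Delta t$ small enough so that
\[
\Delta t\le \frac{\varepsilon}{2\mathcal{N}_r(\varphi)\mathcal{P}_r(T,\|x_0\|)},
\]
which, combined with the upper bound on $T$, allows one to pick $\Delta t$ of the order of $\varepsilon/|\log(\varepsilon)|^M$ (one can also ensure $\Delta t\le \Delta t_0$ for $\varepsilon$ small enough, and treat $\varepsilon$ close to $1$ trivially by enlarging the constant $C$).

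With these two choices, the total error is bounded by $\varepsilon$ and the cost per realization, which is proportional to $N=T/\Delta t$, satisfies
\[
\mathcal{C}(\varepsilon)=N\le C(x_0,\varphi)\,|\log(\varepsilon)|\cdot \frac{|\log(\varepsilon)|^M}{\varepsilon}=C(x_0,\varphi)\,\varepsilon^{-1}|\log(\varepsilon)|^{1+M},
\]
so the claim holds with $R=M$. There is no real obstacle here: the argument is a direct balancing of the two error contributions in Theorem~\ref{theo:error}, and the only point requiring attention is to track that $\mathcal{P}_r$ is polynomial in $T$ with degree depending only on $r$ (and $\|x_0\|$ contributing only to the prefactor), which is what allows us to pay just a logarithmic factor in the final cost bound.
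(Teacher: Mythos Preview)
Your proof is correct and follows essentially the same approach as the paper: both choose the time horizon $T=N\Delta t$ of order $|\log(\varepsilon)|$ to kill the exponential term, then pick $\Delta t$ of order $\varepsilon/|\log(\varepsilon)|^R$ to control the polynomial-in-$T$ term, yielding $N\sim \varepsilon^{-1}|\log(\varepsilon)|^{1+R}$. Your version is slightly more explicit about the constants and the edge cases ($\Delta t\le \Delta t_0$, $\varepsilon$ near $1$), but the argument is the same balancing of the two contributions from Theorem~\ref{theo:error}.
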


Since $|\log(\varepsilon)|^R=\varepsilon^{\frac1R}$ for all $R\in\N$, one has $\mathcal{C}(\varepsilon)\le C_\alpha\varepsilon^{-\frac{1}{\alpha}}$, for all $\alpha\in(0,1)$.

If in Theorems~\ref{theo:moment} and~\ref{theo:error} the upper bounds were uniform in time, one could choose $R=0$ in Corollary~\ref{cor:cost}. The polynomial growth with respect to $T$ only introduces a polynomial factor in $|\log(\varepsilon)|$, which does not significantly increase the computational cost. Note that if the dependence with respect to $T$ in Theorems~\ref{theo:moment} and~\ref{theo:error} had been exponential, an additional factor which would have been a power of $\varepsilon^{-1}$ would have appeared in Corollary~\ref{cor:cost}, and in turn the computational cost would have significantly increased.

\begin{proof}[Proof of Corollary~\ref{cor:cost}]
The error estimate of Theorem~\ref{theo:error} is rewritten as follows: there exists $R\in\N$ such that
\[
\big|\E[\varphi(X_N)]-\int\varphi d\mu_\star\big|\le C\bigl(\exp(-\gamma N\Delta t)+(1+(N\Delta t)^R)\Delta t\bigr),
\]
where $C$ is a constant (depending on $x_0$ and $\varphi$). Note that $R$ may depend on $\varphi$ (more precisely on the value of $r$ such that $\mathcal{N}_r(\varphi)<\infty$.

The parameters $N$ and $\Delta t$ are chosen such that
\[
N\Delta t=C|\log(\varepsilon)|
\]
and
\[
(N\Delta t)^{R}\Delta t=C\varepsilon.
\]

This leads to
\[
\Delta t=C\varepsilon |\log(\varepsilon)|^{-R},
\]
and finally
\[
N=C|\log(\varepsilon)|\Delta t^{-1}=C\varepsilon^{-1}|\log(\varepsilon)|^{1+R}.
\]
This concludes the proof of Corollary~\ref{cor:cost}.
\end{proof}

\begin{rem}
In~\cite{MilsteinTretyakov}, the authors propose to use the so-called rejecting exploding trajectories technique to approximate ergordic averages $\int \varphi d\mu_\star$, for SDEs with non-globally Lipschitz coefficients. This technique requires to introduce an auxiliary truncation parameter. However, even if in practice it is effective, this technique does not lead to a clean analysis of the cost as in Corollary~\ref{cor:cost}.
\end{rem}

\begin{rem}\label{rem:multi}
The results can be generalized, under appropriate assumptions, to SDEs driven by multiplicative noise
\[
dX(t)=f(X(t))dt+\sigma(X(t))dB(t),
\]
where $\sigma(x)dB(t)=\sum_{k=1}^{K}\sigma_k(x)d\beta_k(t)$. Slight modifications of the proof of Theorem~\ref{theo:moment} are sufficient if the functions $\sigma_k$, $k\in\{1,\ldots,K\}$, are assumed to be bounded. These functions are also to be Lipschitz continuous. The one-sided Lipschitz condition~\eqref{eq:condition} from Assumption~\ref{ass:condition} needs to be modified to ensure the uniqueness of the invariant distribution by the coupling argument providing~\eqref{eq:ergoX}. The condition~\eqref{eq:condition} also needs to be modified in order to be able to prove Theorem~\ref{theo:error}, in particular to prove exponential decrease in time of derivatives of the solution of the associated Kolmogorov equation.

These modifications and the statements of appropriate conditions would only make the presentation more complex, so for pedagogical reasons the details are only provided for the additive noise case.
\end{rem}

\section{Proof of Theorem~\ref{theo:moment}}\label{sec:proof1}

Recall that $M_n=\alpha\|f(X_n)\|$, and that the auxiliary process $\bigl(\tilde{X}(t)\bigr)_{t\ge 0}$ is defined by~\eqref{eq:tildeX}.

Introduce an auxiliary parameter $R=\Delta t^{-\kappa}$, where $\kappa\in(0,\frac{1}{2q})$ and $q$ is given by Assumption~\ref{ass:poly}.

For every $n\ge 0$, let $\Omega_{R,t_{n}}=\{\underset{0\le \ell\le n}\sup~\|X_\ell\|\le R\}$, and to simplify notation let $\chi_n=\mathds{1}_{\Omega_{R,t_n}}$ denote the indicator function of the set $\Omega_{R,t_n}$. Let also $\chi_{-1}=1$.

To prove Theorem~\ref{theo:moment}, it suffices to prove the following two auxiliary results.
\begin{lemma}\label{lemma1}

For every $m\in[1,\infty)$, there exists a polynomial function $\mathcal{P}_m^1:\R^2\to \R$, such that for all $T\in(0,\infty)$ and $x_0\in\R^d$, one has
\begin{equation}\label{eq:lemma1}
\underset{\Delta t\in(0,\Delta t_0]}\sup~\underset{0\le n\Delta t\le T}\sup~\E[\chi_{n-1}\|X_n\|^m]\le \mathcal{P}_m^1(T,\|x_0\|).
\end{equation}
\end{lemma}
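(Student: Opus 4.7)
The plan is to prove~\eqref{eq:lemma1} by induction on $n$, establishing the one-step contraction
\[
\E[\chi_n \|X_{n+1}\|^m] \le (1-c_m \Delta t)\E[\chi_{n-1}\|X_n\|^m] + C_m \Delta t
\]
for constants $c_m, C_m \in (0,\infty)$ independent of $n$ and $\Delta t$. Since $\chi_n\le\chi_{n-1}$, iterating from $\E[\chi_{-1}\|X_0\|^m]=\|x_0\|^m$ then yields a bound uniform in $T$, and \emph{a fortiori} polynomial in $T$ as required. The reason such a contraction is available is that $\chi_n$ forces $\|X_\ell\|\le R=\Delta t^{-\kappa}$ for $\ell\le n$ with $\kappa<1/(2q)$, so that on $\Omega_{R,t_n}$ Assumption~\ref{ass:poly} gives $\chi_n\Delta t\|f(X_n)\| \le C\Delta t^{1-\kappa q} \to 0$, and the tamed drift $\tilde f_n := f(X_n)/(1+\alpha\Delta t\|f(X_n)\|)$ is essentially $f(X_n)$ up to a negligible correction.

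First I would treat $m=2$ to expose the mechanism. Expanding
\[
\|X_{n+1}\|^2 = \|X_n\|^2 + 2\Delta t\langle X_n,\tilde f_n\rangle + \Delta t^2\|\tilde f_n\|^2 + 2\langle X_n + \Delta t\tilde f_n,\sigma\Delta B_n\rangle + \|\sigma\Delta B_n\|^2,
\]
multiplying by $\chi_n$ (which is $\mathcal F_n$-measurable) and conditioning on $\mathcal F_n$ kills the martingale cross-term and replaces the last term by $\Delta t\sum_k\|\sigma_k\|^2$. For the drift term I would use the identity
\[
\frac{1}{1+\alpha\Delta t\|f(X_n)\|} = 1 - \frac{\alpha\Delta t\|f(X_n)\|}{1+\alpha\Delta t\|f(X_n)\|}
\]
together with Assumption~\ref{ass:condition} applied at $x_1=0$, which yields $\langle X_n,f(X_n)\rangle\le -\gamma\|X_n\|^2+\|f(0)\|\|X_n\|$; Young's inequality absorbs the linear-in-$\|X_n\|$ contribution into a fraction of the $\gamma\|X_n\|^2$ term. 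The taming correction is bounded on $\Omega_{R,t_n}$ by $2\alpha\Delta t^2\|X_n\|\|f(X_n)\|^2\chi_n\le C\Delta t^{2-\kappa(2q+1)}$, and the second-order drift term by $\chi_n\Delta t^2\|\tilde f_n\|^2\le C\Delta t^{2-2\kappa q}$; both are $o(\Delta t)$ because $\kappa<1/(2q)$, which delivers the recursion for $m=2$.

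For general $m\ge 2$, I would apply the same idea after writing $\|X_{n+1}\|^m = (\|X_n\|^2+E_n)^{m/2}$ with $E_n=\|X_{n+1}\|^2-\|X_n\|^2$ and Taylor-expanding to second order. The linear-in-$E_n$ contribution recovers the $m=2$ analysis scaled by $\tfrac{m}{2}\|X_n\|^{m-2}$, producing the decay $-m\gamma\Delta t\|X_n\|^m\chi_n$ plus terms polynomial in $\|X_n\|$ of strictly lower degree (absorbable by Young) and $o(\Delta t)$ taming corrections. The quadratic remainder in $E_n$ is bounded using Gaussian moments of $\sigma\Delta B_n$ together with $\chi_n\|\tilde f_n\|\le CR^q$, yielding $\Delta t$ times a polynomial in $\|X_n\|$ of degree at most $m$, whose coefficient on $\|X_n\|^m$ tends to zero with $\Delta t$ thanks once more to $\kappa<1/(2q)$. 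The main obstacle will be the combinatorial bookkeeping at this stage: each monomial arising in the multinomial expansion of $\|X_n+\Delta t\tilde f_n+\sigma\Delta B_n\|^m$ must be classified as contributing to the main decay, absorbable by Young, or of order $\Delta t^{1+\delta}$ for some $\delta>0$. The restriction to $\Omega_{R,t_n}$ via $\chi_n$ is precisely what makes this last category possible, which is why this truncated bound must be proved before attempting the unrestricted statement of Theorem~\ref{theo:moment}.
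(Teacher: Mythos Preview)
Your approach is essentially sound but takes a genuinely different route from the paper, and with one caveat actually delivers a stronger conclusion for this lemma.

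The paper does \emph{not} argue by a discrete one-step recursion. Instead it works with the continuous interpolation $\tilde X$ and introduces the splitting $\tilde X=Y+Z$ with $Y(t)=\int_0^t f(\tilde X(s))\,ds$, so that $Y$ solves the random ODE $Y'=f(Y+Z)$. The one-sided Lipschitz condition is applied to this ODE (via $\langle Y,f(Y+Z)-f(Z)\rangle\le-\gamma\|Y\|^2$), reducing matters to moment bounds on $Z$. The process $Z$ is then decomposed as $Z_0+Z_1+Z_2$ (initial data plus Brownian integral, taming remainder, and consistency error $f(X_{\ell(s)})-f(\tilde X(s))$), each piece estimated directly on $\Omega_{R,t_{n-1}}$. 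Because $Z_0$ contains the raw Brownian integral, its moment grows like $T^{1/2}$, and this is the source of the polynomial dependence on $T$ in the paper's bound. Your recursion avoids this entirely: if the contraction inequality holds with constants independent of $\Delta t$, iteration gives a bound on $\E[\chi_{n-1}\|X_n\|^m]$ that is \emph{uniform} in $T$, which is strictly more than the lemma asks for and more than the paper obtains.

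One correction: your assertion that the taming remainder $2\alpha\Delta t^{2}\|X_n\|\|f(X_n)\|^{2}\chi_n\le C\Delta t^{\,2-\kappa(2q+1)}$ is $o(\Delta t)$ ``because $\kappa<1/(2q)$'' is not quite right; you need $\kappa<1/(2q+1)$ for that exponent to exceed $1$. Either tighten the choice of $\kappa$ (it is a free parameter), or, better, keep $\kappa<1/(2q)$ and instead write $\chi_n\|X_n\|\|f(X_n)\|^{2}\le C\chi_n(1+\|X_n\|^{2q+1})\le C+CR^{2q-1}\|X_n\|^{2}$, which turns the taming remainder into $C\Delta t^{2}+C\Delta t^{\,2-\kappa(2q-1)}\|X_n\|^{2}$; since $\kappa(2q-1)<1$ this last term is absorbable into the main decay $-\gamma\Delta t\|X_n\|^{2}$ for $\Delta t$ small. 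With this fix your $m=2$ step goes through, and the higher-$m$ bookkeeping you outline is standard (and is indeed where all the work lies).
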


\begin{lemma}\label{lemma2}

For every $m\in[1,\infty)$, there exists a polynomial function $\mathcal{P}_m^2:\R^2\to \R$, such that for all $T\in(0,\infty)$ and $x_0\in\R^d$, one has
\begin{equation}\label{eq:lemma2}
\underset{\Delta t\in(0,\Delta t_0]}\sup~\underset{0\le n\Delta t\le T}\sup~\E[(1-\chi_{n})\|X_n\|^m]\le \mathcal{P}_m^2(T,\|x_0\|).
\end{equation}
\end{lemma}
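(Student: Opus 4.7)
The plan is to apply Cauchy--Schwarz to split off the small probability of the bad event,
\[
\E[(1-\chi_n)\|X_n\|^m]\le \bigl(\mathbb{P}(\Omega_{R,t_n}^c)\bigr)^{1/2}\bigl(\E[\|X_n\|^{2m}]\bigr)^{1/2},
\]
and then estimate the two factors separately. The governing philosophy is that a very crude moment bound for $X_n$ (blowing up like $\Delta t^{-m}$) is perfectly acceptable, because the probability of exiting the ball of radius $R=\Delta t^{-\kappa}$ can be made smaller than any prescribed polynomial power of $\Delta t$ by using Lemma~\ref{lemma1} with an arbitrarily large exponent.

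For the moment factor I would exploit the taming directly. Regardless of the value of $X_k$, the deterministic increment satisfies $\bigl\|\tfrac{\Delta t f(X_k)}{1+\alpha\Delta t\|f(X_k)\|}\bigr\|\le 1/\alpha$, so summing the scheme~\eqref{eq:scheme} telescopes the noise into $\sigma B(t_n)$ and yields the pathwise estimate $\|X_n\|\le \|x_0\|+n/\alpha+\|\sigma B(t_n)\|$. Taking the $2m$-th moment and using standard Gaussian estimates then gives $\bigl(\E[\|X_n\|^{2m}]\bigr)^{1/2}\le C(\|x_0\|^m+T^m)\Delta t^{-m}$ whenever $n\Delta t\le T$ and $\Delta t\le \Delta t_0$.

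For the probability factor I would decompose $\Omega_{R,t_n}^c$ along the first exit time $\tau=\inf\{\ell\ge 0:\|X_\ell\|>R\}$. Once $\Delta t$ is small enough that $R\ge\|x_0\|$, the event $\{\tau=0\}$ is empty, and for $\ell\ge 1$ one has $\{\tau=\ell\}\subset\{\chi_{\ell-1}=1,\ \|X_\ell\|>R\}$. Markov's inequality with an exponent $p$ to be chosen large, combined with Lemma~\ref{lemma1}, yields
\[
\mathbb{P}(\chi_{\ell-1}=1,\ \|X_\ell\|>R)\le R^{-p}\E[\chi_{\ell-1}\|X_\ell\|^p]\le R^{-p}\mathcal{P}_p^1(T,\|x_0\|).
\]
Summing the $\ell\le n\le T/\Delta t$ terms gives $\mathbb{P}(\Omega_{R,t_n}^c)\le T\mathcal{P}_p^1(T,\|x_0\|)\,\Delta t^{\kappa p-1}$. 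The remaining regime in which $\Delta t$ is not small enough for $R\ge\|x_0\|$ corresponds to $\Delta t$ bounded below and is disposed of trivially using the crude moment bound alone.

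Inserting the two estimates back into the Cauchy--Schwarz inequality produces a factor $\Delta t^{(\kappa p-1)/2-m}$ multiplying a polynomial in $T$ and $\|x_0\|$. Since $\kappa>0$ is fixed, choosing any integer $p\ge (2m+1)/\kappa$ makes this exponent nonnegative, so the $\Delta t$-dependence drops out and the bound collapses to a polynomial $\mathcal{P}_m^2(T,\|x_0\|)$, as required. There is no deep obstacle here: the only delicate point is balancing the choice of $p$ in Lemma~\ref{lemma1} against the $\Delta t^{-m}$ blow-up in the worst-case moment bound, and the freedom to take $p$ arbitrarily large in Lemma~\ref{lemma1} is precisely what makes this work.
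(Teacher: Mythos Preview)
Your proof is correct and follows essentially the same route as the paper: the paper also decomposes $1-\chi_n=\sum_{\ell=0}^{n}\chi_{\ell-1}\mathds{1}_{\|X_\ell\|>R}$ (your first-exit-time decomposition), applies the crude pathwise bound $\|X_n\|\le \|x_0\|+C\,n+\|\sigma B(t_n)\|$ coming from the taming, and invokes Lemma~\ref{lemma1} with an arbitrarily large exponent $\theta$ (your $p$) to kill the negative powers of $\Delta t$. The only cosmetic differences are that the paper applies Cauchy--Schwarz term-by-term after the decomposition rather than before, and it avoids your separate treatment of the case $\|x_0\|>R$ by including the $\ell=0$ term in the sum via the convention $\chi_{-1}=1$.
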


Theorem~\ref{theo:moment} is then a straightforward consequence of Lemma~\ref{lemma1} and Lemma~\ref{lemma2}.
\begin{proof}[Proof of Theorem~\ref{theo:moment}]
Since $\Omega_{R,t_n}\subset\Omega_{R,t_{n-1}}$, one has $\chi_n\le \chi_{n-1}$. Writing
\[
\E[\|X_n\|^m]=\E[\chi_n\|X_n\|^m]+\E[(1-\chi_n)\|X_n\|^m]\le \E[\chi_{n-1}\|X_n\|^m]+\E[(1-\chi_n)\|X_n\|^m],
\]
combining the auxiliary moment bounds~\eqref{eq:lemma1} and~\eqref{eq:lemma2} then concludes the proof.
\end{proof}

In the proofs, values of constants $C,C_m\in(0,\infty)$ and polynomial functions $\mathcal{P}_m$ may change from line to line.

\begin{proof}[Proof of Lemma~\ref{lemma1}]
Introduce two auxiliary processes $\bigl(Y(t)\bigr)_{t\ge 0}$ and $\bigl(Z(t)\bigr)_{t\ge 0}$ as follows: for all $t\ge 0$,
\begin{align*}
Z(t)&:=\tilde{X}(t)-\int_0^t f(\tilde{X}(s))ds\\
Y(t)&:=\tilde{X}(t)-Z(t).
\end{align*}
One obtains the following equality: for all $t\ge 0$
\[
Y(t)=\int_0^t f(\tilde{X}(s))ds=\int_0^tf(Y(s)+Z(s))ds,
\]
thus $Y$ solves the differential equation
\[
\frac{dY(t)}{dt}=f(Y(t)+Z(t)).
\]
Using the one-sided Lipschitz condition~\eqref{eq:condition} satisfied by $f$ (Assumption~\ref{ass:condition}), one obtains
\begin{align*}
\frac12\frac{d\|Y(t)\|^2}{dt}&=\langle Y(t),f(Y(t)+Z(t)\rangle\\
&\le \langle Y(t),f(Y(t)+Z(t))-f(Z(t))\rangle+\|Y(t)\|\|f(Z(t))\|\\
&\le -\gamma\|Y(t)\|^2+\|Y(t)\|f(Z(t))\|.
\end{align*}
Since $\gamma>0$, using Young's inequality and Gronwall's lemma, one obtains
\[
\|Y(t)\|^2\le C\int_0^t \|f(Z(s))\|^2ds.
\]
Multiplying by $\chi_{n-1}$ and using Minkowski's inequality, for every $m\in[1,\infty)$, one obtains
\begin{equation}\label{eq:momentY}
\bigl(\E[\chi_{n-1}\|Y(t_n)\|^{2m}]\bigr)^{\frac1m}\le C\int_0^t \bigl(\E[\chi_{n-1}\|f(Z(s))\|^{2m}]\bigr)^{\frac1m}ds.
\end{equation}
Since $f$ has at most polymomial growth (Assumption~\ref{ass:poly}), oo obtain~\eqref{eq:lemma1}, it is thus sufficient to prove that for every $m\in[1,\infty)$, one has an estimate of the type
\begin{equation}\label{eq:momentZ}
\underset{0\le t\le t_n\le T}\sup~\E[\chi_{n-1}\|Z(t)\|^{m}]\le \mathcal{P}_m(T,\|x_0\|).
\end{equation}

By definition of the auxiliary process $Z$, one has for all $t\ge 0$
\begin{align*}
Z(t)&=\tilde{X}(t)-\int_{0}^{t}f(\tilde{X}(s))ds\\
&=X_0+\int_0^t\sigma dB(s)-\int_0^t\frac{\Delta tM_{\ell(s}}{1+\Delta tM_{\ell(s)}}f(X_{\ell(s)})ds+\int_0^t\bigl[f(X_{\ell(s)})-f(\tilde{X}(s))\bigr]ds\\
&=:Z_0(t)+Z_1(t)+Z_2(t).
\end{align*}

First, for all $0\le t\le t_n\le T$, one has
\[
\bigl(\E[\chi_{n-1}\|Z_0(t)\|^m]\bigr)^{\frac1m}\le \bigl(\E[Z_0(t)\|^m\bigr)^{\frac1m} \le \|x_0\|+C_mT^{\frac{1}{2}},
\]
for some $C_m\in(0,\infty)$.

Second, recall that $M_\ell=\alpha \|f(X_\ell)\|$. Since $f$ has at most polynomial growth (Assumption~\ref{ass:poly}), one obtains
\begin{align*}
\bigl(\E[\chi_{n-1}\|Z_1(t)\|^m]\bigr)^{\frac1m}&\le \alpha\Delta t\int_0^t\bigl(\E[\chi_{n-1}\|f(X_{\ell(s)})\|^{2m}]\bigr)^{\frac1m}ds\\
&\le C \Delta t(1+R^{2q})\le C (\Delta t_0+\Delta t_0^{1-2q\kappa}),
\end{align*}
where we recall that $R=\Delta t^{-\kappa}$ with $2q\kappa<1$.

It remains to deal with the term $\E[\chi_{n-1}\|Z_2(t)\|^m]$. Using the fact that the derivative of $f$ has at most polynomial growth (Assumption~\ref{ass:poly}) and Minkowski and Cauchy-Schwarz inequalities, one obtains for all $t\ge 0$
\begin{align*}
\bigl(&\E[\chi_{n-1}\|Z_2(t)\|^m]\bigr)^{\frac1m}\le \int_0^t\bigl(\chi_{n-1}\|f(X_{\ell(s)}-f(\tilde{X}(s)\|^m\bigr)^{\frac1m}ds\\
&\le C\int_{0}^t \bigl(\E[\chi_{n-1}\|X_{\ell(s)}-\tilde{X}(s)\|^{2m}]\bigr)^{\frac{1}{2m}}\bigl(1+\E[\chi_{n-1}\|X_{\ell(s)}\|^{2mq}]+\E[\chi_{n-1}\|\tilde{X}(s)\|^{2mq}]\bigr)^{\frac{1}{2m}} ds.
\end{align*}
The first factor in the integrand above is treated as follows: for $s\le t<t_n$, one obtains
\begin{align*}
\chi_{n-1}\|X_{\ell(s)}-\tilde{X}(s)\|&\le \chi_{n-1}\frac{|s-t_{\ell(s)}|}{1+\Delta tM_{\ell(s)}}\|f(X_{\ell(s)}\|+C\|\sigma\bigl(B(s)-B(t_{\ell(s)})\bigr)\|\\
&\le C\Delta t(1+R^q)+C\|B(s)-B_{t_{\ell(s)}}\|.
\end{align*}
As a consequence, one obtains
\[
\bigl(\E[\chi_{n-1}\|X_{\ell(s)}-\tilde{X}(s)\|^{2p}]\bigr)^{\frac{1}{2p}}\le C_p\bigl(\Delta t(1+R^q)+\Delta t^{\frac12}\bigr)\le C\Delta t^{\frac12}(\Delta t_0^{\frac{1}{2}-\kappa q}+1),
\]
for all $\Delta t\in(0,\Delta t_0]$, using the definition $R=\Delta t^{-\kappa}$ with $2q\kappa<1$.

To treat the second factor of the integrand above, it suffices to write
\[
\E[\chi_{n-1}\|X_{\ell(s)}\|^{2mq}]+\E[\chi_{n-1}\|\tilde{X}(s)\|^{2mq}\le C\E[\chi_{n-1}\|X_{\ell(s)}\|^{2mq}]+C\E[\chi_{n-1}\|\tilde{X}(s)-X_{\ell(s)}\|^{2mq},
\]
and to use the estimate on the first factor above and the inequality $\E[\chi_{n-1}\|X_{\ell(s)}\|^{2mq}]\le R^{2mq}$.

Finally, using again the inequality $\Delta t^{\frac12}R^q\le \Delta t_0^{\frac12-q\kappa}$ for all $\Delta t\in(0,\Delta t_0]$, one obtains
\[
\bigl(\E[\chi_{n-1}\|Z_2(t)\|^m]\bigr)^{\frac1m}\le C(\Delta t_0)T.
\]

Gathering the estimates then yields~\eqref{eq:momentZ}. Inserting~\eqref{eq:momentZ} in the inequality~\eqref{eq:momentY} then yields
\[
\E[\chi_{n-1}\|Y(t_n)\|^{m}]\le \mathcal{P}_m(T,\|x_0\|),
\]
if $t_n\le T$ and $\Delta t\in(0,\Delta t_0]$.

Since $X_n=\tilde{X}(t_n)=Y(t_n)+Z(t_n)$, this concludes the proof of Lemma~\ref{lemma1}.
\end{proof}

\begin{proof}[Proof of Lemma~\ref{lemma2}]

Recall that $\chi_n=\mathds{1}_{\Omega_{R,t_{n}}}$, with $\Omega_{R,t_n}=\{\underset{0\le \ell\le n}\sup~\|X_\ell\|\le R\}$ and $\chi_{-1}=1$. As a consequence, one has
\begin{align*}
1-\chi_n&=\mathds{1}_{\Omega_{R,t_n}^c}=\mathds{1}_{\Omega_{R,t_{n-1}}^c}+\mathds{1}_{\Omega_{R,t_{n-1}}}\mathds{1}_{\|X_n\|>R}\\
&=1-\chi_{n-1}+\chi_{n-1}\mathds{1}_{\|X_n\|>R}.
\end{align*}
One thus obtains the equality
\[
1-\chi_n=\sum_{\ell=0}^{n}\chi_{\ell-1}\mathds{1}_{\|X_\ell\|>R}.
\]
Let $m\in\N$. Using Minkowksi, Cauchy-Schwarz and Markov inequalities, one obtains
\begin{align*}
\bigl(\E[(1-\chi_n)\|X_n\|^{m}]\bigr)^{\frac1m}&\le \sum_{\ell=0}^{n}\bigl(\E[\chi_{\ell-1}\mathds{1}_{\|X_\ell\|>R}\|X_n\|^m]\bigr)^{\frac1m}\\
&\le \sum_{\ell=0}^{n}\bigl(\E[\|X_n\|^{2m}]\bigr)^{\frac{1}{2m}}\bigl(\E[\chi_{\ell-1}\frac{\|X_\ell\|^\theta}{R^\theta}]\bigr)^{\frac{1}{2m}},
\end{align*}
where $\theta\in\N$ is chosen below.

On the one hand, by construction of the tamed Euler scheme, one has
\[
\|\tilde{X}(t)\|\le \|x_0\|+\frac{T}{\Delta t}+\|\int_0^t\sigma(\tilde{X}(s))dB(s)\|,
\]
thus 
\[
\bigl(\E[\|X_n\|^{2m}]\bigr)^{\frac{1}{2m}}\le C(\|x_0\|+T^{\frac12}+\frac{T}{\Delta t}).
\]
On the other hand, applying Lemma~\ref{lemma1} yields
\[
\E[\chi_{\ell-1}\|X_\ell\|^\theta]\le \mathcal{P}_\theta^1(T,\|x_0\|).
\]
Gathering the estimates yields
\[
\bigl(\E[(1-\chi_n)\|X_n\|^{m}]\bigr)^{\frac1m}\le C\mathcal{P}_\theta^1(T,\|x_0\|)^{\frac{1}{2m}}(1+\frac{T}{\Delta t})(\|x_0\|+T^{\frac12}+\frac{T}{\Delta t})R^{-\frac{\theta}{2m}}.
\]
Since $R=\Delta t^{-\kappa}$, it suffices to choose $\theta$ such that $\frac{\theta\kappa}{2m}>2$ in order to obtain~\eqref{eq:lemma2}.

This concludes the proof of Lemma~\ref{lemma2}.

\end{proof}

\section{Proof of Theorem~\ref{theo:error}}\label{sec:proof2}

The objective of this section is to prove Theorem~\ref{theo:error}. In Section~\ref{sec:Kolmo}, some auxiliary results concerning the solution of the associated Kolmogorov equation. Even if this type of results is standard, a proof is provided for completeness. Then Theorem~\ref{theo:error} follows from the weak error analysis of Section~\ref{sec:error}.

Like in Section~\ref{sec:proof1}, the values of constants $C\in(0,\infty)$ and of polynomial functions $\mathcal{P}_r$ or $\mathcal{P}$ may change from line to line.

\subsection{Auxiliary result: Kolmogorov equation}\label{sec:Kolmo}

Let $u(t,x)=\E_x[\varphi(X_t)]$, for all $t\ge 0$ and $x\in\R^d$. Then $u$ is the solution of the Kolmogorov equation
\[
\partial_tu(t,x)=\mathcal{L}u(t,x)=Du(t,x).f(x)+\frac{1}{2}\sum_{k=1}^{K}D^2u(t,x).(\sigma_k,\sigma_k)
\]
with initial condition $u(0,\cdot)=\varphi$. Set $\overline{u}(t,\cdot)-\int \varphi d\mu_\star$.

The objective of this section is to prove the following lemma.
\begin{lemma}\label{lem:Kolmogorov}
Let Assumptions~\ref{ass:poly} and~\ref{ass:condition} be satisfied.

For any $r\in\N$, there exists $R\in\N$ such that the following holds: for all $\varphi:\R^d\to\R$ of class $\mathcal{C}^2$ which satisfies $\mathcal{N}_r(\varphi)<\infty)$, for all $\gamma'\in[0,\gamma)$, one has
\[
\underset{t\ge 0}\sup~\mathcal{N}_{R}(\overline{u}(t,\cdot))e^{\gamma' t}<\infty.
\]
\end{lemma}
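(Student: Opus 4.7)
The plan is to establish three pointwise estimates, one for each term contributing to $\mathcal{N}_R(\overline{u}(t,\cdot))$: for some exponent $R\in\N$ depending on $r$ and some polynomial $\mathcal{Q}$,
\[
|\overline{u}(t,x)|+\|Du(t,x)\|+\|D^2u(t,x)\|\le \mathcal{Q}(\|x\|)(1+\|x\|^R)\,e^{-\gamma' t},
\]
for all $\gamma'\in[0,\gamma)$. Since $\overline{u}$ and $u$ differ by a constant, their derivatives coincide. Throughout I would use the polynomial moment bounds~\eqref{eq:momentX} (and finiteness of all polynomial moments of $\mu_\star$, which follows by letting $T\to\infty$ in~\eqref{eq:momentX}).

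\emph{Zeroth order.} I would exploit the invariance of $\mu_\star$ to write $\int\varphi\,d\mu_\star=\int\E_y[\varphi(X_t)]\,d\mu_\star(y)$, so that
\[
\overline{u}(t,x)=\int \E\bigl[\varphi(X_x(t))-\varphi(X_y(t))\bigr]\,d\mu_\star(y),
\]
with the two processes driven by the same noise. The a.s. contraction $\|X_x(t)-X_y(t)\|\le e^{-\gamma t}\|x-y\|$ coming from Assumption~\ref{ass:condition} (already used to prove~\eqref{eq:ergoX}), combined with the mean value theorem and the polynomial growth of $D\varphi$, yields
\[
|\varphi(X_x(t))-\varphi(X_y(t))|\le C\mathcal{N}_r(\varphi)\,e^{-\gamma t}\|x-y\|\bigl(1+\|X_x(t)\|^r+\|X_y(t)\|^r\bigr).
\]
Cauchy--Schwarz together with~\eqref{eq:momentX} and integration against $\mu_\star$ then give $|\overline{u}(t,x)|\le C(1+\|x\|^{r+1})e^{-\gamma t}$, which implies the required bound for any $\gamma'<\gamma$.

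\emph{First order.} Since the noise in~\eqref{eq:SDE} is additive, the first variation $\eta^h(t)=D_xX_x(t)\cdot h$ solves the \emph{deterministic} ODE (given the path of $X$)
\[
\frac{d\eta^h}{dt}=Df(X_x(t))\cdot\eta^h,\qquad \eta^h(0)=h.
\]
Differentiating the one-sided Lipschitz condition~\eqref{eq:condition} gives $\langle Df(x)h,h\rangle\le -\gamma\|h\|^2$ for all $x,h\in\R^d$, hence the energy estimate $\tfrac{d}{dt}\|\eta^h\|^2\le -2\gamma\|\eta^h\|^2$ and the pathwise bound $\|\eta^h(t)\|\le e^{-\gamma t}\|h\|$. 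Writing $Du(t,x)\cdot h=\E[D\varphi(X_x(t))\cdot\eta^h(t)]$ and using $\|D\varphi(z)\|\le \mathcal{N}_r(\varphi)(1+\|z\|^r)$ together with~\eqref{eq:momentX} yields $|Du(t,x)\cdot h|\le C\mathcal{N}_r(\varphi)(1+\|x\|^{R_1})e^{-\gamma t}\|h\|$ for some $R_1\in\N$.

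\emph{Second order (main obstacle).} The second variation $\xi(t)=D^2_xX_x(t)\cdot(h_1,h_2)$ satisfies the inhomogeneous ODE
\[
\frac{d\xi}{dt}=Df(X)\cdot\xi+D^2f(X)\cdot(\eta^{h_1},\eta^{h_2}),\qquad \xi(0)=0.
\]
The difficulty is that $D^2f$ has only polynomial growth (Assumption~\ref{ass:poly}), so the forcing term is not bounded. To absorb this into the dissipation, I would use Young's inequality with a small parameter $2(\gamma-\gamma')$ to obtain
\[
\frac{d}{dt}\|\xi\|^2\le -2\gamma'\|\xi\|^2+C\|D^2f(X)\|^2\|\eta^{h_1}\|^2\|\eta^{h_2}\|^2,
\]
where $\gamma'<\gamma$ is precisely where the freedom in the statement is used. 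Plugging in $\|D^2f(X(s))\|\le C(1+\|X(s)\|^q)$, the pathwise bound $\|\eta^{h_i}(s)\|\le e^{-\gamma s}\|h_i\|$, and integrating, the factor $e^{(2\gamma'-4\gamma)s}$ is integrable, and the moment bounds~\eqref{eq:momentX} give $\E\|\xi(t)\|^2\le \mathcal{Q}(\|x\|)\,e^{-2\gamma' t}\|h_1\|^2\|h_2\|^2$. Finally, decomposing
\[
D^2u(t,x)\cdot(h_1,h_2)=\E\bigl[D^2\varphi(X)\cdot(\eta^{h_1},\eta^{h_2})\bigr]+\E\bigl[D\varphi(X)\cdot\xi\bigr]
\]
and bounding each term by Cauchy--Schwarz, the polynomial growth of $D\varphi$, $D^2\varphi$, the pathwise decay of the $\eta^{h_i}$, and the above bound on $\xi$, yields the required estimate $\|D^2u(t,x)\|\le \mathcal{Q}(\|x\|)(1+\|x\|^{R_2})e^{-\gamma' t}$. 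Taking $R$ to be the maximum of the exponents arising in the three steps completes the proof.
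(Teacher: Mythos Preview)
Your proposal is correct and follows essentially the same approach as the paper: a coupling argument integrated against $\mu_\star$ for the zeroth order, the first-variation equation $\dot\eta^h=Df(X)\eta^h$ with the pathwise contraction $\|\eta^h(t)\|\le e^{-\gamma t}\|h\|$ for the first order, and the second-variation equation for $\xi$ (the paper's $\zeta^{h_1,h_2}$) handled via Young's inequality to trade a bit of the dissipation rate (your $\gamma'<\gamma$, the paper's small parameter $\epsilon$) against the polynomially growing forcing $D^2f(X)\cdot(\eta^{h_1},\eta^{h_2})$. The final assembly via Cauchy--Schwarz and the moment bounds~\eqref{eq:momentX} is also the same.
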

In other words, the function $\overline{u}$ and its first and second order spatial derivatives $D\overline{u}(t,\cdot)$ and $D\overline{u}(t,\cdot)$ have at most polynomial growth, and $\mathcal{N}_{R}(\overline{u}(t,\cdot))$ decreases exponentially fast when $t\to\infty$. This type of result and the strategy of the proof are standard. A specific feature of the approach considered in this article is that a weaker result is employed: since we do not expect to prove uniform in time weak error estimates, and to have polynomial in time dependence instead, it is sufficient to use uniform in time upper bounds for the derivatives $D\overline{u}(t,\cdot)$ and $D\overline{u}(t,\cdot)$. Thus in the analysis of the weak error due to temporal discretization, one will use estimates with $\gamma'=0$. Keeping $\gamma'>0$ would only allow to reduce the degree of the polynomial dependence with respect to $T$, but this does not qualitatively change the analysis of the cost. The fact that $\gamma'>0$ has a role only to estimate the error $\overline{u}(t,x)=u(t,x)-\int\varphi d\mu_\star$, and as will be clear below for this contribution one can take $\gamma'=\gamma$ (see also~\eqref{eq:ergoX}).

\begin{proof}
Recall that, for every $x\in\R^d$, the solution of~\eqref{eq:SDE} with initial condition $X(0)=x$ is denoted by $\bigl(X_x(t)\bigr)_{t\ge 0}$.

First, let $x_1,x_2\in\R^d$, then the one-sided Lipschitz condition~\eqref{eq:condition} implies
\[
\frac{1}{2}\frac{d\|X^{x_2}(t)-X^{x_1}(t)\|^2}{dt}=\langle f(X^{x_2}(t))-f(X^{x_1}(t)),X^{x_2}(t)-X^{x_1}(t)\rangle\le -\gamma\|X^{x_2}(t)-X^{x_1}(t)\|^2,
\]
thus $\|X^{x_2}(t)-X^{x_1}(t)\|\le e^{-\gamma t}\|x_2-x_1\|$ for all $t\ge 0$. One obtains for all $t\ge 0$
\begin{align*}
\big|\E[\varphi(X^{x_2}(t))]-\E[\varphi(X^{x_1}(t))]\big|&\le C\mathcal{N}_r(\varphi)\E\bigl[\bigl(1+\|X^{x_1}(t)\|^r+\|X^{x_2}(t)\|\bigr)\|X^{x_2}(t)-X^{x_1}(t)\|\bigr]\\
&\le C\mathcal{N}_r(\varphi)\bigl(1+\mathcal{P}_r(\|x_1\|)+\mathcal{P}_r(\|x_2\|)\bigr)e^{-\gamma t}\|x_2-x_1\|.
\end{align*}
Choosing $x_2=x$ and integrating with respect to $d\mu_\star(x_2)$ then yields
\[
|\overline{u}(t,x)|\le C\mathcal{N}_r(\varphi)e^{-\gamma t}(1+\|x\|\mathcal{P}_r(\|x\|)),
\]
using the moment bound~\eqref{eq:momentX} for the exact solution.

Next, the first-order derivative of $\overline{u}(t,\cdot)$ satisfies
\[
D\overline{u}(t,x).h=\E[D\varphi(X^x(t)).\eta^h(t)],
\]
where one has
\[
\frac{d\eta^h(t)}{dt}=Df(X^x(t)).\eta^h(t)~,\quad \eta^h(0)h.
\]
Using the one-sided Lipschitz condition~\eqref{eq:condition} implies
\[
\frac{1}{2}\frac{d\|\eta^h(t)\|^2}{dt}=\langle Df(X^x(t)),\eta^h(t)\rangle\le -\gamma\|\eta^h(t)\|^2,
\]
thus Gronwall's lemma yields the inequality $\|\eta^h(t)\|\le e^{-\gamma t}\|h\|$ for all $t\ge 0$. One obtains
\[
|D\overline{u}(t,x).h|\le e^{-\gamma t}\|h\|\mathcal{N}_r(\varphi)\E[1+\|X^x(t)\|^r]\le e^{-\gamma t}\|h\|\mathcal{N}_r(\varphi)\bigl(1+\mathcal{P}_r(\|x\|)\bigr),
\]
using the moment bound~\eqref{eq:momentX} for the exact solution.

Finally, the second-order derivative of $\overline{u}(t,\cdot)$ satisfies
\[
D^2\overline{u}(t,x).(h_1,h_2)=\E[D\varphi(X^x(t)).\zeta^{h_1,h_2}(t)]+\E[D^2\varphi(X^x(t)).(\eta^{h_1}(t),\eta^{h_2}(t))],
\]
where one has
\[
\frac{d\zeta^{h_1,h_2}(t)}{dt}=Df(X^x(t)).\zeta^{h_1,h_2}(t)+D^2f(X^x(t)).(\eta^{h_1}(t),\eta^{h_2}(t))~,\quad \zeta^{h_1,h_2}(0)=0.
\]
Using the one-sided Lipschitz condition~\eqref{eq:condition}, the at most polynomial growth of $f$ and its derivatives (Assumption~\ref{ass:poly}) and Young's inequality, one has
\begin{align*}
\frac{1}{2}\frac{d\|\zeta^{h_1,h_2}(t)\|^2}{dt}&=\langle Df(X^x(t)).\zeta^{h_1,h_2}(t),\zeta^{h_1,h_2}(t)\rangle+\langle D^2f(X^x(t)).(\eta^{h_1}(t),\eta^{h_2}(t)),\zeta^{h_1,h_2}(t)\rangle\\
&\le -(\gamma+\epsilon)\|\zeta^{h_1,h_2}(t)\|^2+\frac{1}{4\epsilon}\mathcal{N}_{q}(f)^2(1+\|X^x(t)\|^{2q})e^{-4\gamma t}\|h_1\|^2\|h_2\|^2.
\end{align*}
Using Gronwall's lemma and the moment bound~\eqref{eq:momentX} for the exact solution, one obtains
\[
\E[\|\zeta^{h_1,h_2}(t)\|^2]\le Ce^{-2\gamma't}\|h_1\|^2\|h_2\|^2\bigl(1+\mathcal{P}_{2q}(\|x\|)\bigr).
\]
Finally, this gives the inequalities
\begin{align*}
\big|D^2\overline{u}(t,x).(h_1,h_2)\big|&\le \mathcal{N}_r(\varphi)\bigl(\E[(1+\|X^x(t)\|^{r})^2]\bigr)^{\frac12}\bigl(\E\|\zeta^{h_1,h_2}(t)\|^2\bigr)^{\frac12}\\
&~+\mathcal{N}_r(\varphi)\E[(1+\|X^x(t)\|)^r)]e^{-2\gamma t}\|h_1\|\|h_2\|\\
&\le Ce^{-\gamma' t}\|h_1\|\|h_2\|\mathcal{N}_r(\varphi)(1+\mathcal{P}_{2r}(\|x\|)).
\end{align*}
Gathering the estimates then concludes the proof.

\end{proof}

\subsection{Weak error analysis}\label{sec:error}

By a linearity argument, without loss of generality one assumes that $\mathcal{N}_r(\varphi)\le 1$.

\begin{proof}[Proof of Theorem~\ref{theo:error}]

The weak error can be written as
\begin{align*}
\E[\varphi(X_N)]-\int\varphi d\mu_\star&=\E[\overline{u}(0,X_N)]\\
&=\E[\overline{u}(0,X_N)]-\E[\overline{u}(N\Delta t,X_0)]+\overline{u}(N\Delta t,x_0).
\end{align*}
Using Lemma~\ref{lem:Kolmogorov}, one has
\begin{equation}\label{ineq1}
\big|\overline{u}(N\Delta t,x_0)\big|\le e^{-\gamma N\Delta t}\mathcal{N}_r(\varphi)\mathcal{Q}_r(\|x_0\|),
\end{equation}
for some polynomial function $\mathcal{Q}_r$.

Using successively a telescoping sum argument, It\^o's formula and the fact that $\overline{u}$ solves the Kolmogorov equation, one obtains
\begin{align*}
\E[\overline{u}(0,X_N)]-\E[\overline{u}(N\Delta t,X_0)]&=\sum_{n=0}^{N-1}\bigl(\E[\overline{u}(t_N-t_{n+1},X_{n+1})]-\E[\overline{u}(t_N-t_n,X_n)]\bigr)\\
&=\sum_{n=0}^{N-1}\bigl(\E[\overline{u}(t_N-t_{n+1},\tilde{X}(t_{n+1}))]-\E[\overline{u}(t_N-t_n,\tilde{X}(t_n))]\bigr)\\
&=\sum_{n=0}^{N-1}\int_{t_n}^{t_{n+1}}\E[D\overline{u}(t_N-t,\tilde{X}(t)).\bigl(\frac{f(X_n)}{1+\Delta tM_n}-f(\tilde{X}(t))\bigr)]dt,
\end{align*}
where $t_n=n\Delta t$, and the auxiliary process $\bigl(\tilde{X}(t)\bigr)$ is defined by~\eqref{eq:tildeX} and satisfies $\tilde{X}(t_n)=X_n$, with $M_n=\alpha\|f(X_n)\|$.

Introduce the following decomposition:
\[
\E[\overline{u}(0,X_N)]-\E[\overline{u}(N\Delta t,X_0)]=\epsilon_N^1+\epsilon_N^2+\epsilon_N^3,
\]
with
\begin{align*}
\epsilon_N^1&=-\Delta t\int_{0}^{t_N}\E[M_{\ell(t)}D\overline{u}(t_N-t,\tilde{X}(t)).f(X_{\ell(t)})]dt\\
\epsilon_N^2&=\sum_{n=0}^{N-1}\int_{t_n}^{t_{n+1}}\E[D\overline{u}(t_N-t,X_n).\bigl(f(X_n)-f(\tilde{X}(t))\bigr)]dt\\
\epsilon_N^3&=\sum_{n=0}^{N-1}\int_{t_n}^{t_{n+1}}\E[\bigl(D\overline{u}(t_N-t,\tilde{X}(t))-D\overline{u}(t_N-t,X_n)\bigr).\bigl(f(X_n)-f(\tilde{X}(t))\bigr)]dt.
\end{align*}

First, using Lemma~\ref{lem:Kolmogorov} and Theorem~\ref{theo:moment}, one obtains
\begin{align*}
|\epsilon_N^1|&\le \alpha\Delta t\int_0^{t_N}\E[(1+\|\tilde{X}(t)\|^R)\|f(X_n)\|^2]dt\\
&\le Ct_N\Delta t(1+\underset{0\le t\le t_N}\sup~\bigl(\E[\|\tilde{X}(t)\|^{2R}]\bigr)^{\frac12}\bigl(1+\underset{0\le n\Delta t\le T}\sup~\E[\|X_n\|^{4q}])^{\frac12}\\
&\le C\Delta t\mathcal{P}(t_N,\|x_0\|),
\end{align*}
where $\mathcal{P}$ is a polynomial function.

Note that the following result has been used above: Theorem~\ref{theo:moment} gives moment bounds for $X_n$, with $0\le n\Delta t\le t_N$, and it is straightforward to deduce moment bounds of the same type for $\tilde{X}(t)$, with $0\le t\le T$, {\it i.e.}
\[
\underset{0\le t\le T}\sup~\E[\|\tilde{X}(t)\|^m]\le \mathcal{P}_m(T,\|x_0\|).
\]

Second, applying It\^o's formula and a conditional expectation argument gives for all $t\in[t_n,t_{n+1}]$
\begin{align*}
\E[D\overline{u}(t_N-t,X_n).&\bigl(f(X_n)-f(\tilde{X}(t))\bigr)]dt=\E[D\overline{u}(t_N-t,X_n).\bigl(\int_{t_n}^{t}Df(\tilde{X}(s)).\frac{f(X_n)}{1+\Delta tM_n}ds\bigr)]\\
&~+\E[D\overline{u}(t_N-t,X_n).\bigl(\frac12\int_{t_n}^{t}\sum_{k=1}^K D^2f(\tilde{X}(s)).(\sigma_k,\sigma_k)ds\bigr)].
\end{align*}
This yields
\begin{align*}
|\epsilon_N^2|&\le C\sum_{n=0}^{N-1}\int_{t_n}^{t_{n+1}}\int_{t_n}^{t_{n+1}}\E\bigl[(1+\|X_n\|)^R (1+\|\tilde{X}(t))\|^q (1+\|X_n\|^q) \bigr]dsdt\\
&~+C\Delta t\sum_{k=1}^{K}\|\sigma_k\|^2\sum_{n=0}^{N-1}\int_{t_n}^{t_{n+1}}\int_{t_n}^t\E[(1+\|X_n\|^R)(1+\|\tilde{X}(s)\|^q)]dsdt.
\end{align*}
Using the Cauchy-Schwarz inequality and moment bounds for $X_n$ and $\tilde{X}(s)$, one then obtains
\[
|\epsilon_N^2|\le C\Delta t\mathcal{P}(t_N,\|x_0\|),
\]
where $\mathcal{P}$ is a polynomial function.

Finally, using a Taylor expansion, one has
\begin{align*}
|\epsilon_N^3|&\le \sum_{n=0}^{N-1}\int_{t_n}^{t_{n+1}}\E[(1+\|X_n\|^R+\|\tilde{X}(t)\|^R)\|f(\tilde{X}(t))-f(X_n)\|^2]dt\\
&\le C\sum_{n=0}^{N-1}\int_{t_n}^{t_{n+1}}\E[(1+\|X_n\|^{R+2q}+\|\tilde{X}(t)\|^{R+2q})\|\tilde{X}(t)-X_n\|^2]dt\\
&\le C\Delta t\mathcal{P}(t_N,\|x_0\|),
\end{align*}
using moment bounds, Cauchy-Schwarz inequality and the inequality
\[
\E[\|\tilde{X}(t)-X_n\|^4]\le C\Delta t^4\E[\|f(X_n)\|^4]+\E[\|\sigma (B(t)-B(t_n))\|^4]\le C\Delta t^2\mathcal{P}(t_N,\|x_0\|).
\]

Gathering the estimates gives
\begin{equation}\label{ineq2}
\big|\E[\overline{u}(0,X_N)]-\E[\overline{u}(N\Delta t,X_0)]\le \Delta t\mathcal{P}(N\Delta t,\|x_0\|).
\end{equation}

Combining the inequalities~\eqref{ineq1} and~\eqref{ineq2} then concludes the proof of Theorem~\ref{theo:error}.
\end{proof}

\section{Acknoledgments}

This work is partially supported by the SIMALIN project ANR-19-CE40-0016 of the French National Research Agency.


\end{document}